\documentclass{amsart}
\usepackage{amsmath,amsthm,amssymb}

\title{On a conjecture of Cigler}

\author{D. Stanton}
\address{School of Mathematics, University of Minnesota, Minneapolis, MN 55455}
\email{stanton@math.umn.edu}
\date{\today}

\thanks{The author was supported by NSF grant DMS-1148634}

\newtheorem{prop}{Proposition}[section]
\newtheorem{defn}[prop]{Definition}
\newtheorem{conj}[prop]{Conjecture}

\newtheorem{cor}[prop]{Corollary}
\newtheorem{theorem}[prop]{Theorem}

\begin{document}

\begin{abstract}
A conjecture of Cigler which realizes normalized 
$q$-Hermite polynomials as moments is verified.
\end{abstract}

\maketitle

\section{Introduction}
The $q$-Hermite polynomials $H_n(x|q)$ may be defined by 
$$
H_n(x|q)=\sum_{k=0}^n \left[\begin{matrix} n\\k\end{matrix}\right]_q e^{-i(n-2k)\theta}, \quad x=\cos\theta.
$$
(We have used the standard notation for $q$-series given in \cite{GaRa} and 
\cite{Ismbook}.)
In \cite[Th. 4.1]{IsS}, four moment representations are given for four different normalizations of these polynomials
$$
H_n(x|q), \quad \frac{H_n(x|q)}{(q;q)_n}, \quad \frac{H_n(x|q^2)}{(q;q)_n}, \quad \frac{H_n(x|q^2)}{(-q;q)_n}.
$$
 Four other moment 
representations are alluded to at the end of Section 4 of \cite{IsS},
$$
\frac{H_{2n}(x|q)}{(q^2;q^2)_n}, 
\quad \frac{H_{2n}(x|q)}{(q;q^2)_n}, \quad \frac{H_{2n+1}(x|q)}{(q^2;q^2)_n}, \quad \frac{H_{2n+1}(x|q)}{(q^3;q^2)_n}.
$$

Cigler \cite[Section 3.4]{Cig} conjectured that the 6th and 8th of these 
moment sequences may be combined into a single moment sequence
$$
\mu_n=P_n(a)=\frac{1}{(q;q^2)_{[(n+1)/2]}}
\sum_{k=0}^n 
\left[\begin{matrix} n\\k\end{matrix}\right]_q a^k, \quad n\ge 0
$$
for a specific set of orthogonal polynomials. 
In this short note we prove Cigler's conjecture.

\section{The conjecture}

First we define a set of monic orthogonal polynomials $s_n(x)$ 
via a three-term recurrence relation. 

\begin{defn} 
\label{maindefn}
Let $s_n(x)$ be monic polynomials in $x$ defined by 
$$
s_{n+1}(x)=(x-b_n)s_n(x)-\lambda_n s_{n-1}(x), \quad n\ge 0, 
\quad s_0(x)=1, \ s_{-1}(x)=0,
$$ 
where
$$
\begin{aligned}
b_n=& -\frac{(1-q)}{(1-q^{2n+1})(1-q^{2n-1})(1+a)}\times \\
& \left(  a\frac{(1-q^{2n-1})(1-q^{n+1})(1-q^n)}{1-q}-q^n
\left( \frac{1-q^{n-1}}{1-q}+q^{n+1}\frac{1-q^n}{1-q}\right)(1+a)^2\right),
{\text{ for $n\ge 0$ even,}}\\
b_n=& \frac{(1-q)}{(1-q^{2n+1})(1-q^{2n-1})(1+a)}\times \\
& \left(  a\frac{(1-q^{2n+1})(1-q^{n-1})(1-q^n)}{1-q}-q^{n+1}
\left( \frac{1-q^{n}}{1-q}+q^{n-2}\frac{1-q^{n+1}}{1-q}\right)(1+a)^2\right),
{\text{ for $n\ge 1$ odd,}}\\
\lambda_n=& \frac{q^n(1+a)^2(1-q^{n-1})(1-q^n)}{(1-q^{2n-1})^2}, {\text{ for $n\ge 1$ even,}} \\
\lambda_n=& -\frac{(a+q^n)(a+q^{n-1})(1+aq^{n-1})(1+aq^n)}{(1+a)^2(1-q^{2n-1})^2}, 
{\text{ for $n\ge 1$ odd.}}
\end{aligned}
$$
\end{defn}
\vskip5pt\noindent
\begin{defn} Let $L$ be the linear functional on polynomials in $x$ given by
$$
L(1)=1, \quad L(s_n(x))=0, \quad n\ge 1.
$$
\end{defn}

Cigler \cite[Section 3.4]{Cig} conjectured
\begin{conj} (Cigler)
\label{Cigconj}
$$
L(x^n)=P_n(a).
$$
\end{conj}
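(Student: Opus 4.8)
The plan is to deduce Conjecture \ref{Cigconj} from the standard bijection between a monic orthogonal polynomial sequence and its moment sequence, using the two already-known pieces from \cite{IsS}. I would begin by recording that $P_n(a)$ is, up to the stated normalization, a Rogers--Szeg\H{o} polynomial: with $h_n(a)=\sum_{k=0}^n\left[\begin{smallmatrix}n\\k\end{smallmatrix}\right]_q a^k$ one has $\mu_{2m}=h_{2m}(a)/(q;q^2)_m$ and $\mu_{2m+1}=h_{2m+1}(a)/(q;q^2)_{m+1}$, and via $H_n(\cos\theta\,|\,q)=e^{-in\theta}h_n(e^{2i\theta})$ the two subsequences are precisely the sixth and eighth moment sequences of \cite{IsS} under the substitution $a=e^{2i\theta}$. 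Since $L$ is the unique functional attached to the coefficients of Definition \ref{maindefn}, proving the conjecture amounts to checking that the Hankel data of the sequence $(P_n(a))$ reproduce those coefficients.

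For the verification itself I would use the Hankel-determinant form of the correspondence. Writing $\Delta_n=\det(\mu_{i+j})_{0\le i,j\le n}$, one has $\lambda_1\cdots\lambda_n=\Delta_n/\Delta_{n-1}$ together with the companion ratio that produces $b_n$; so it suffices to evaluate $\Delta_n$ in closed product form for the mixed sequence $P_n(a)$ and to read the coefficients off. The closed forms should be accessible because the even and odd subsequences are the \cite{IsS} families, whose Hankel determinants factor into $q$-Pochhammer products, while the Rogers--Szeg\H{o} recurrence $h_{n+1}(a)=(1+a)h_n(a)-a(1-q^n)h_{n-1}(a)$ supplies the row and column reductions needed to handle the interleaving. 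An equivalent and sometimes cleaner route is to prove directly the $J$-fraction $\sum_{n\ge0}\mu_n x^{-n-1}=1/(x-b_0-\lambda_1/(x-b_1-\cdots))$ with the data of Definition \ref{maindefn}; structurally this is the quadratic decomposition of $s_n$ into its even and odd parts, which should match the two \cite{IsS} families in base $q^2$.

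The main obstacle is the lack of symmetry. The coefficients in Definition \ref{maindefn} are not those of a symmetric family: every $b_n$ is nonzero, and both $b_n$ and $\lambda_n$ split into genuinely different expressions according to the parity of $n$, the odd $\lambda_n$ carrying the four factors $(a+q^{n-1})(a+q^n)(1+aq^{n-1})(1+aq^n)$. Consequently neither the classical quadratic dictionary nor a pure even/odd Hankel evaluation applies verbatim: the sequence $(P_n(a))$ interleaves the two families, so its Hankel determinants are genuinely mixed and the correct decomposition must first absorb the $b_n$, for instance by an affine or Geronimus/kernel-polynomial adjustment. Once that bookkeeping is fixed, including how the two normalizations $(q;q^2)_m$ and $(q;q^2)_{m+1}$ enter, what remains is a substantial but routine $q$-series simplification, reducing the rational-in-$q$ expressions coming out of the determinant ratios to the even and odd forms of $b_n$ and $\lambda_n$ by means of the $q$-Pochhammer identities in \cite{GaRa} and \cite{Ismbook}. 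I expect essentially all of the difficulty to sit in this parity-split matching and in the mixed Hankel evaluation, not in any conceptual step.
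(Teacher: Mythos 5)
There is a genuine gap: your argument defers the entire content of the proof to a Hankel determinant evaluation that you never carry out, and for which you supply no mechanism. The correspondence between a moment sequence and the coefficients $b_n,\lambda_n$ via $\Delta_n/\Delta_{n-1}$ (or via the $J$-fraction) is of course valid, but invoking it only restates Conjecture~\ref{Cigconj} in an equivalent form; the closed-form evaluation of $\det(P_{i+j}(a))_{0\le i,j\le n}$ is itself a conjecture of Cigler (\cite[Conjecture 3.1 (3.4)]{Cig}), and in the paper it appears only as a \emph{corollary} of the theorem, not as an input. Your own discussion concedes the decisive point: because every $b_n$ is nonzero and both $b_n$ and $\lambda_n$ split by parity, the interleaved sequence is not symmetric, the quadratic decomposition does not apply, and the mixed Hankel determinant is not controlled by the Hankel determinants of the two \cite{IsS} subsequences. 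The phrase ``once that bookkeeping is fixed'' is exactly where a new idea is required, and none is given; labelling what remains as ``substantial but routine'' is not justified by anything in the proposal.

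The paper's route is different and supplies the missing device. It replaces the monomial basis by the basis $\prod_{i=0}^{n-1}(x^2-a^2q^{2i})$ and $x\prod_{i=0}^{n-1}(x^2-a^2q^{2i})$, on which the putative functional values collapse, via $q$-Vandermonde and the $q$-binomial theorem, to the closed forms $(-a;q)_{2n+\epsilon}/(q;q^2)_{n+\epsilon}$ (Proposition~\ref{newmoms}); it then proves an explicit connection-coefficient expansion of this basis in terms of the $s_m(x)$ (Proposition~\ref{expansion}), by induction on $n$ using the three-term recurrence of Definition~\ref{maindefn} twice. Applying $L$ kills all terms except the $s_0$ (and, for the odd case, the $s_1$ and $s_0$) contributions, which are matched directly. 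If you want to salvage your approach, you would need an independent evaluation of the mixed Hankel determinants together with the shifted determinants that determine the $b_n$; absent that, the proposal does not constitute a proof.
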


\section{The proof}

To prove Conjecture~\ref{Cigconj}, we give
another basis for polynomials which has an explicit $L$ value.
\begin{prop} 
\label{newmoms}
Cigler's conjecture is equivalent to
$$
\begin{aligned}
L\left(\prod_{i=0}^{n-1}(x^2-a^2q^{2i})\right)=& \frac{(-a;q)_{2n}}{(q;q^2)_n},\\
L\left(x\prod_{i=0}^{n-1}(x^2-a^2q^{2i})\right)=& \frac{(-a;q)_{2n+1}}{(q;q^2)_{n+1}}.
\end{aligned}
$$
\end{prop}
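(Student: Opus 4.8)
The plan is to reduce the Proposition to a self-contained evaluation of an auxiliary functional, using the fact that the polynomials on the two left-hand sides form a basis compatible with the even/odd grading of the polynomial ring. The even products $\prod_{i=0}^{n-1}(x^2-a^2q^{2i})$ are monic of degree $2n$ and span the even polynomials, while the $x\prod_{i=0}^{n-1}(x^2-a^2q^{2i})$ are monic of degree $2n+1$ and span the odd polynomials; in each parity the transition matrix to the monomials $\{x^m\}$ is unitriangular, hence invertible. Consequently a linear functional is pinned down by its values on these products exactly as it is by its moments $L(x^m)$.

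I would introduce the auxiliary functional $\tilde L$ determined by $\tilde L(x^m)=P_m(a)$, so that Cigler's conjecture is precisely the assertion $L=\tilde L$. Since the products form a basis, $L=\tilde L$ if and only if $L$ and $\tilde L$ agree on every product. I would therefore compute $\tilde L$ on the products directly from the definition of $P_m(a)$, with no reference to the recurrence of Definition~\ref{maindefn}, and show that it produces exactly the stated right-hand sides,
$$\tilde L\Bigl(\prod_{i=0}^{n-1}(x^2-a^2q^{2i})\Bigr)=\frac{(-a;q)_{2n}}{(q;q^2)_n},\qquad \tilde L\Bigl(x\prod_{i=0}^{n-1}(x^2-a^2q^{2i})\Bigr)=\frac{(-a;q)_{2n+1}}{(q;q^2)_{n+1}}.$$
Once this is done, $L=\tilde L$ becomes equivalent to $L$ matching these same right-hand sides, which is exactly the statement of the Proposition.

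To evaluate $\tilde L$ on a product I would expand by the finite $q$-binomial theorem in base $q^2$,
$$\prod_{i=0}^{n-1}(x^2-a^2q^{2i})=\sum_{k=0}^n \left[\begin{matrix} n\\k\end{matrix}\right]_{q^2}(-a^2)^k q^{2\binom{k}{2}}\,x^{2(n-k)},$$
apply $\tilde L$ termwise, and substitute $\tilde L(x^{2(n-k)})=P_{2(n-k)}(a)$ (with one extra factor of $x$ in the odd case). Writing $P_{2m}(a)=\frac{1}{(q;q^2)_m}\sum_j \left[\begin{matrix}2m\\j\end{matrix}\right]_q a^j$ on the left and using $(-a;q)_{2n}=\sum_j \left[\begin{matrix}2n\\j\end{matrix}\right]_q q^{\binom{j}{2}}a^j$ on the right, the evaluation reduces, after comparing coefficients of each power $a^J$, to the single double-to-single sum identity
$$\sum_{k}\left[\begin{matrix} n\\k\end{matrix}\right]_{q^2}\frac{(-1)^k q^{2\binom{k}{2}}}{(q;q^2)_{n-k}}\left[\begin{matrix}2(n-k)\\ J-2k\end{matrix}\right]_q=\frac{q^{\binom{J}{2}}}{(q;q^2)_n}\left[\begin{matrix}2n\\J\end{matrix}\right]_q.$$

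The main obstacle is this last identity, whose difficulty is that it couples the bases $q$ and $q^2$. I expect it to follow from the even/odd factorization $(-a;q)_{2n}=(-a;q^2)_n(-aq;q^2)_n$ together with a quadratic summation of $q$-Vandermonde type, or equivalently from the recurrences satisfied by the Rogers--Szeg\H{o} polynomials $\sum_j \left[\begin{matrix}n\\j\end{matrix}\right]_q a^j$ that underlie the $P_m(a)$. The odd identity arises from the same expansion shifted by one in parity, so I would run both parities in parallel and let the single coefficient identity above settle both evaluations at once.
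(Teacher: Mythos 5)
Your proposal is correct and follows essentially the same route as the paper: expand $x^{\epsilon}\prod_{i=0}^{n-1}(x^2-a^2q^{2i})$ by the $q$-binomial theorem in base $q^2$, apply the putative moment functional termwise, and reduce the coefficient of each power of $a$ to a sum evaluated by a limiting case of the $q$-Vandermonde theorem (your displayed double-to-single sum identity is exactly the paper's $k$-sum once the $n$-dependent factors are extracted, and it does hold). The only difference is cosmetic: the paper proves just the direction ``moments $\Rightarrow$ product values'' and leaves the converse implicit, whereas you make the triangularity/basis argument and the auxiliary functional $\tilde L$ explicit.
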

\begin{proof} We show that $L(x^n)=P_n(a)$ implies the $L$ values 
which are given in Proposition~\ref{newmoms}.
Let $\epsilon=0$ for the first case and $\epsilon=1$ for the second case. 
Expanding by the $q$-binomial theorem we see that
$$
\begin{aligned}
L & \left(x^\epsilon \prod_{i=0}^{n-1}(x^2-a^2q^{2i})\right)=
\sum_{k=0}^n \left[\begin{matrix} n\\k\end{matrix}\right]_{q^2}(-1)^k 
a^{2k}q^{2\binom{k}{2}}L(x^{2(n-k)+\epsilon})\\
=& \sum_{k=0}^n \left[\begin{matrix} n\\k\end{matrix}\right]_{q^2}(-1)^k 
a^{2k}q^{2\binom{k}{2}}\frac{1}{(q;q^2)_{n-k+\epsilon}}
\sum_{s=0}^{2(n-k)+\epsilon} 
\left[\begin{matrix} 2(n-k)+\epsilon\\s\end{matrix}\right]_{q}
a^s\\
=& 
(q^2;q^2)_n\sum_{p=0}^{2n+\epsilon} \frac{a^p}{(q;q)_{2n+\epsilon-p}} 
\sum_{k=0}^{[p/2]} 
\frac{(-1)^k q^{2\binom{k}{2}}}{(q^2;q^2)_k (q;q)_{p-2k}} \\
=& \frac{1}{(q;q^2)_{n+\epsilon}}
\sum_{p=0}^{2n+\epsilon} a^p \left[\begin{matrix} 2n+\epsilon\\p\end{matrix}\right]_{q}
q^{\binom{p}{2}}= \frac{(-a;q)_{2n+\epsilon}}{(q;q^2)_{n+\epsilon}}.
\end{aligned}
$$
The $k$-sum was evaluated by a limiting case of the $q$-Vandermonde 
sum, and the $p$-sum evaluated by the $q$-binomial theorem, see 
\cite[Appendix]{GaRa}.
\end{proof}

Because we know the values of $L(s_n(x))$, if we can expand 
$\prod_{i=0}^{n-1}(x^2-a^2q^{2i})$ in terms of $s_n(x)$ we can verify 
Proposition~\ref{newmoms}. The next Proposition accomplishes this task.
\begin{prop} For any non-negative integer $n$,
\label{expansion}
$$
\prod_{i=0}^{n-1}(x^2-a^2q^{2i})= \sum_{k=0}^{2n} a_k^{(n)} s_{2n-k}(x),
$$
where
$$
\begin{aligned}
a_{2k}^{(n)}=&\frac{(-aq^{2n-1};1/q)_{2k}}{(q^{4n-2k-1};1/q^2)_k}
\left[\begin{matrix} n\\k \end{matrix} \right]_{q^2},\\
a_{2k+1}^{(n)}=&(1+a)\frac{(-aq^{2n-1};1/q)_{2k}}{(q^{4n-2k-1};1/q^2)_{k+1}}
\left[\begin{matrix} n\\k+1 \end{matrix} \right]_{q^2} (1-q^{2(k+1)}).
\end{aligned}
$$
\end{prop}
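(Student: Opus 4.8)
The plan is to prove Proposition~\ref{expansion} by induction on $n$, using the obvious factorization $\prod_{i=0}^{n}(x^2-a^2q^{2i})=(x^2-a^2q^{2n})\prod_{i=0}^{n-1}(x^2-a^2q^{2i})$ of the left-hand side. The base case $n=0$ is immediate, since the empty product is $1=s_0(x)$ and the stated formula gives $a_0^{(0)}=1$. For the inductive step I would assume the expansion for $n$, multiply it by $x^2-a^2q^{2n}$, rewrite the right-hand side in the basis $\{s_m(x)\}$, and match coefficients with the claimed $a_k^{(n+1)}$. The mechanism that makes this work is the action of multiplication by $x^2$: iterating the recurrence of Definition~\ref{maindefn} twice yields
$$
x^2 s_m(x)=s_{m+2}(x)+(b_{m+1}+b_m)s_{m+1}(x)+(\lambda_{m+1}+b_m^2+\lambda_m)s_m(x)+\lambda_m(b_m+b_{m-1})s_{m-1}(x)+\lambda_m\lambda_{m-1}s_{m-2}(x).
$$

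Substituting this with $m=2n-k$ into the inductive hypothesis, subtracting $a^2q^{2n}\sum_k a_k^{(n)}s_{2n-k}(x)$, and collecting the coefficient of each $s_{2n+2-j}(x)$ reduces the entire proposition to a single five-term recurrence for the connection coefficients,
$$
\begin{aligned}
a_j^{(n+1)}={}& a_j^{(n)}+(b_{2n-j+2}+b_{2n-j+1})\,a_{j-1}^{(n)}+(\lambda_{2n-j+3}+b_{2n-j+2}^2+\lambda_{2n-j+2}-a^2q^{2n})\,a_{j-2}^{(n)}\\
&+\lambda_{2n-j+3}(b_{2n-j+3}+b_{2n-j+2})\,a_{j-3}^{(n)}+\lambda_{2n-j+4}\lambda_{2n-j+3}\,a_{j-4}^{(n)},
\end{aligned}
$$
with the convention that $a_k^{(n)}=0$ whenever $k<0$ or $k>2n$. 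Once this identity is established for all $j$ and $n$, Proposition~\ref{expansion} follows.

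The verification of this recurrence is where the real work lies. Since $b_n$, $\lambda_n$, and $a_k^{(n)}$ are all given by formulas that split according to the parity of the index, I would organize the check into cases by the parities of $j$ (and, where the two forms of $b_n$ intervene, of $n$). In each case every quantity is an explicit ratio of $q$-shifted factorials and $q^2$-binomials; dividing through by $a_j^{(n)}$ turns the recurrence into an identity between rational functions of $q^n$, $q^j$, and $a$, in which the surviving $q$-shifted factorials cancel down to factors of bounded degree in $a$. Each such identity can then be reduced to an elementary relation among $q$-shifted factorials and confirmed by clearing denominators and comparing powers of $a$. The main obstacle is purely the bulk of this computation: the two-case definition of $b_n$ forces several parallel sub-cases, and the coefficient $\lambda_{m+1}+b_m^2+\lambda_m-a^2q^{2n}$ in particular produces the lengthiest simplifications, so careful bookkeeping of the parities is essential.

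Finally, I would note that Proposition~\ref{expansion} delivers exactly what is needed for Proposition~\ref{newmoms}: applying $L$ to the expansion and using $L(s_0)=1$ together with $L(s_m)=0$ for $m\ge 1$ annihilates every term except the one with $2n-k=0$, leaving $L\!\left(\prod_{i=0}^{n-1}(x^2-a^2q^{2i})\right)=a_{2n}^{(n)}$. A short check shows $a_{2n}^{(n)}=(-a;q)_{2n}/(q;q^2)_n$, which is the first equation of Proposition~\ref{newmoms}; the second equation follows in the same way after expanding $x\prod_{i=0}^{n-1}(x^2-a^2q^{2i})$ and applying the three-term recurrence once more before evaluating $L$. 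Combined with Proposition~\ref{newmoms}, this proves Cigler's conjecture.
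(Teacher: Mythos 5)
Your proposal is correct and follows essentially the same route as the paper: induction on $n$, applying the three-term recurrence twice to express $x^2 s_m(x)$ in the $s$-basis, and reducing everything to the same five-term recurrence for the connection coefficients (yours agrees with the paper's after absorbing $-a^2q^{2n}$ into the coefficient of $a_{j-2}^{(n)}$ and factoring $\lambda_{2n-j+3}$). Like the paper, you leave the final verification of that recurrence as an explicit but tedious computation, so there is nothing further to add.
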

\begin{proof} Use induction on $n$ and the three-term recurrence 
relation in Definition~\ref{maindefn} twice.  We must verify that
$$
\begin{aligned}
a_k^{(n+1)}=&-a^2q^{2n}a_{k-2}^{(n)}+a_{k}^{(n)}\\
&+(b_{2n-k+2}+b_{2n-k+1})a_{k-1}^{(n)}\\
&+(\lambda_{2n-k+3}+b_{2n-k+2}^2+\lambda_{2n-k+2})a_{k-2}^{(n)}\\
&+(b_{2n-k+3}\lambda_{2n-k+3}+\lambda_{2n-k+3}b_{2n-k+2})a_{k-3}^{(n)}\\
&+\lambda_{2n-k+4}\lambda_{2n-k+3}a_{k-4}^{(n)}.
\end{aligned}
$$
Since each of the terms is explicitly known, this tedious computation may be done by 
hand or computer.
\end{proof}

\begin{theorem} Cigler's conjecture is true. 
\end{theorem}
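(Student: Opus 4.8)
The plan is to combine Propositions~\ref{newmoms} and~\ref{expansion}, using the defining property that $L$ annihilates every $s_m$ with $m\ge 1$ and returns $1$ on $s_0$. I would establish the two $L$-values asserted in Proposition~\ref{newmoms} \emph{directly} from the $s_n$-expansion, and then invoke the equivalence in Proposition~\ref{newmoms} to conclude $L(x^n)=P_n(a)$, which is Cigler's conjecture. For the even case, apply $L$ to the identity of Proposition~\ref{expansion}: since $L(s_{2n-k}(x))=0$ unless $2n-k=0$, only the $k=2n$ term survives, so
$$
L\left(\prod_{i=0}^{n-1}(x^2-a^2q^{2i})\right)=a_{2n}^{(n)}.
$$
It remains to simplify $a_{2n}^{(n)}$: putting $k=n$ in the formula for $a_{2k}^{(n)}$ and rewriting the shifted factorials to base $1/q$ and $1/q^2$ in ascending order gives $(-aq^{2n-1};1/q)_{2n}=(-a;q)_{2n}$ and $(q^{2n-1};1/q^2)_n=(q;q^2)_n$, whence $a_{2n}^{(n)}=(-a;q)_{2n}/(q;q^2)_n$, matching the first formula.

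For the odd case I would multiply the expansion in Proposition~\ref{expansion} by $x$ and apply the three-term recurrence of Definition~\ref{maindefn} once in the form $xs_m(x)=s_{m+1}(x)+b_ms_m(x)+\lambda_ms_{m-1}(x)$. Applying $L$, only the contributions to $s_0$ remain, coming from $b_0$ acting on the $s_0$ component and from $\lambda_1$ acting on the $s_1$ component, so
$$
L\left(x\prod_{i=0}^{n-1}(x^2-a^2q^{2i})\right)=b_0\,a_{2n}^{(n)}+\lambda_1\,a_{2n-1}^{(n)}.
$$
Substituting $b_0=(1+a)/(1-q)$, the $n=1$ value of $\lambda_1$, and the $k=n-1$ case of $a_{2k+1}^{(n)}$, I expect the numerator to collapse via $(1+a)(1-q^{2n+1})-(a+q)(1-q^{2n})=(1-q)(1+aq^{2n})$, producing $(-a;q)_{2n}(1+aq^{2n})/(q;q^2)_{n+1}=(-a;q)_{2n+1}/(q;q^2)_{n+1}$, which is the second formula.

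With both $L$-values verified, Cigler's conjecture follows from Proposition~\ref{newmoms}: the polynomials $\prod_{i=0}^{n-1}(x^2-a^2q^{2i})$ and $x\prod_{i=0}^{n-1}(x^2-a^2q^{2i})$ supply exactly one polynomial of each degree and hence form a basis of $\mathbb{C}[x]$, so prescribing $L$ on them is the same as prescribing all $L(x^n)$. The main obstacle is bookkeeping rather than conceptual: one must check that the equivalence genuinely runs in both directions, since the proof of Proposition~\ref{newmoms} records only the forward implication, and one must reindex the shifted $q$-factorials in $a_{2n}^{(n)}$ and $a_{2n-1}^{(n)}$ with care, as the sign and base changes among $1/q$, $1/q^2$, $q$, and $q^2$ are the easiest place to err.
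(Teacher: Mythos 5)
Your proposal is correct and follows essentially the same route as the paper: apply $L$ to the expansion of Proposition~\ref{expansion} to get $a_{2n}^{(n)}$ for the even case, and use one application of the three-term recurrence to get $b_0\,a_{2n}^{(n)}+\lambda_1\,a_{2n-1}^{(n)}$ for the odd case, then simplify to the values in Proposition~\ref{newmoms}. Your closing remark about checking the reverse direction of the equivalence via the fact that $x^{\epsilon}\prod_{i=0}^{n-1}(x^2-a^2q^{2i})$ gives one polynomial of each degree is a sensible point the paper leaves implicit, but it does not change the argument.
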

\begin{proof}
Proposition~\ref{expansion} shows that the first part of 
Proposition~\ref{newmoms} holds, 
$$
L\left(\prod_{i=0}^{n-1}(x^2-a^2q^{2i})\right)=
a_{2n}^{(n)}=\frac{(-a;q)_{2n}}{(q;q^2)_n}.
$$
For the second part of 
Proposition~\ref{newmoms}, we find the constant term in the expansion of 
$x\prod_{i=0}^{n-1}(x^2-a^2q^{2i}).$ Using Definition~\ref{maindefn} this is 
$$
\begin{aligned}
L\left(x\prod_{i=0}^{n-1}(x^2-a^2q^{2i})\right)=
&a_{2n}^{(n)}b_0+a_{2n-1}^{(n)}\lambda_1\\
=& 
\frac{(-a;q)_{2n+1}}{(q;q^2)_{n+1}}.
\end{aligned}
$$ 
\end{proof}

The Hankel determinant of these moments \cite[Conjecture 3.1 (3.4)]{Cig}  
is thereby evaluated.

\begin{cor} The Hankel determinant is
$$
det(P_{i+j}(a))_{0\le i,j\le n} =\prod_{i=1}^n \lambda_i^{n+1-i}.
$$
where $\lambda_i$ is given in Definition~\ref{maindefn}.
\end{cor}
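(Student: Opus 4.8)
The plan is to recognize this corollary as the classical evaluation of the Hankel determinant of a moment sequence in terms of the recurrence coefficients of the associated monic orthogonal polynomials. Since the Theorem gives $\mu_n=L(x^n)=P_n(a)$, the matrix in question is exactly the Hankel matrix $(L(x^{i+j}))_{0\le i,j\le n}$ of the functional $L$ relative to the family $s_n(x)$ of Definition~\ref{maindefn}, and the desired identity is the standard product formula for such determinants.

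First I would establish the orthogonality relations
$$
L(s_m s_n)=h_n\,\delta_{mn},\qquad h_n=\prod_{i=1}^n\lambda_i .
$$
These follow from the defining properties $L(1)=1$, $L(s_n)=0$ for $n\ge 1$, and the three-term recurrence, by the usual Favard-type induction; the product form of $h_n=L(s_n^2)$ comes from $h_0=1$ together with the relation $h_n=\lambda_n h_{n-1}$, which one gets by expanding $L(xs_n\,s_{n-1})$ in two ways using the recurrence.

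Next I would pass to the change-of-basis matrix $C=(c_{ik})$ defined by $s_i(x)=\sum_k c_{ik}x^k$, which is lower triangular with unit diagonal, hence $\det C=1$. The orthogonality relations translate into the matrix factorization
$$
C\,(\mu_{k+l})_{0\le k,l\le n}\,C^{T}=\mathrm{diag}(h_0,\dots,h_n),
$$
and taking determinants gives
$$
\det(\mu_{i+j})_{0\le i,j\le n}=\prod_{i=0}^n h_i=\prod_{k=1}^n\lambda_k^{\,n+1-k},
$$
the last equality because $\lambda_k$ appears in $h_i$ precisely for $k\le i\le n$.

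I do not expect a genuine obstacle here: this is the well-known orthogonal-polynomial Hankel identity, and the only point deserving attention is the normalization $L(1)=1$, which is exactly what eliminates the factor $\mu_0^{\,n+1}$ present in the general statement and leaves the clean product over the $\lambda_k$. The real content has already been supplied by the Theorem, which identifies the moments $\mu_n$ with $P_n(a)$; granting that, the determinant evaluation is purely formal.
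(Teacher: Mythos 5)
Your proof is correct and is exactly the standard argument the paper is (implicitly) invoking: the paper offers no written proof of the corollary, simply noting that the determinant ``is thereby evaluated'' once the Theorem identifies $P_n(a)$ as the moments of $L$, and the classical identity $\det(\mu_{i+j})_{0\le i,j\le n}=\mu_0^{n+1}\prod_{k=1}^n\lambda_k^{\,n+1-k}$ with $\mu_0=L(1)=1$ is precisely what you derive via the unitriangular change of basis and $h_n=\lambda_n h_{n-1}$. No gaps; your write-up just supplies the standard details the paper leaves to the reader.
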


Additional sets of classical orthogonal polynomials as moments are given in 
\cite{IsS}, \cite{IsS2} and \cite{IsS3}.

\end{document}